\newtheorem{theorem}{Theorem}
\newtheorem{proposition}{Proposition}
\newtheorem{property}{Property}
\newtheorem{lemma}{Lemma}
\newtheorem{definition}{Definition}
\def\di{\displaystyle}
\newcommand{\N}{\mathbb{N}}
\newcommand{\R}{\mathbb{R}}
\newcommand{\LL}{\mathcal{L}}
\newcommand{\CC}{\mathscr{C}}
\newcommand{\DM}{D^\alpha_-}
\newcommand{\DP}{D^\alpha_+}
\newcommand{\E}{\mathrm{E}_{\alpha,p}}
\renewcommand{\L}{\mathrm{L}}
\newcommand{\W}{\mathrm{W}}
\newcommand*{\hooktwoheadrightarrow}{\lhook\joinrel\twoheadrightarrow}
\newcommand{\fonction}[5]{\begin{array}[t]{lrcl}#1 :&#2 &\longrightarrow &#3\\&#4& \longmapsto &#5 \end{array}}
\begin{document}
\title{Existence of a weak solution for fractional Euler-Lagrange equations.}
\author{Lo\"ic Bourdin}
\address{Laboratoire de Math\'ematiques et de leurs Applications - Pau (LMAP). UMR CNRS 5142. Universit\'e de Pau et des Pays de l'Adour.}
\email{bourdin.l@etud.univ-pau.fr}
\maketitle.

\begin{abstract}
We derive sufficient conditions ensuring the existence of a weak solution $u$ for fractional Euler-Lagrange equations of the type:
\begin{equation}\tag{EL${}^\alpha $}
\dfrac{\partial L}{\partial x} (u,\DM u,t) + \DP \left( \dfrac{\partial L}{\partial y} (u,\DM u,t) \right) = 0,
\end{equation}
on a real interval $[a,b]$ and where $\DM$ and $\DP$ are the fractional derivatives of Riemann-Liouville of order $0 < \alpha < 1$.
\end{abstract}

\textbf{\textrm{Keywords:}} Fractional Euler-Lagrange equations; existence; fractional variational calculus.

\textbf{\textrm{AMS Classification:}} 70H03; 26A33.

\section{Introduction}\label{section1}
\subsection{Context in the fractional calculus}\label{section11}
The mathematical field that deals with derivatives of any real order is called fractional calculus. For a long time, it was only considered as a pure mathematical branch. Nevertheless, during the last two decades, fractional calculus has attracted the attention of many researchers and it has been successfully applied in various areas like computational biology \cite{magi} or economy \cite{comt}. In particular, the first and well-established application of fractional operators was in the physical context of anomalous diffusion, see \cite{neel,neel2} for example. Let us mention \cite{metz} proving that fractional equations is a complementary tool in the description of anomalous transport processes. We refer to \cite{hilf3} for a general review of the applications of fractional calculus in several fields of Physics. In a more general point of view, fractional differential equations are even considered as an alternative model to non-linear differential equations, see \cite{boni}. \\

For the origin of the calculus of variations with fractional operators, we should look back to 1996-97 when Riewe used non-integer order derivatives to better describe non conservative systems in mechanics \cite{riew,riew2}. Since then, numerous works on the fractional variational calculus have been made. For instance, in the same spirit, authors of \cite{cres8,cres7} have recently derived fractional variational structures for non conservative equations. Furthermore, one can find a comprehensive literature regarding necessary optimality conditions and Noether's theorem, see \cite{agra,alme,bale2,bour2,torr3,odzi4}. Concerning the state of the art on the fractional calculus of variations and respective fractional Euler-Lagrange equations, we refer the reader to the recent book \cite{torr5}. \\

In the whole paper, we consider $a < b$ two reals, $d \in \N^*$ and the following Lagrangian functional
\begin{equation}
\LL (u) = \di \int_a^b L(u,\DM u,t) \;dt,
\end{equation}
where $L$ is a Lagrangian, \textit{i.e.} a map of the form:
\begin{equation}
\fonction{L}{\R^d \times \R^d \times [a,b]}{\R}{(x,y,t)}{L(x,y,t),}
\end{equation}
where $\DM$ is the left fractional derivative of Riemann-Liouville of order $0 < \alpha < 1$ and where the variable $u$ is a function defined almost everywhere (shortly a.e.) on $(a,b)$ with values in $\R^d$. The precise definitions of the fractional operators of Riemann-Liouville will be recalled in Section~\ref{section22}. It is well-known that critical points of the functional $\LL$ are characterized by the solutions of the \textit{fractional Euler-Lagrange equation}: 
\begin{equation}\tag{EL${}^\alpha $}\label{elf}
\dfrac{\partial L}{\partial x} (u,\DM u,t) + \DP \left( \dfrac{\partial L}{\partial y} (u,\DM u,t) \right) = 0,
\end{equation}
where $\DP$ is the right fractional derivative of Riemann-Liouville, see detailed proofs in \cite{agra,bale2} for example. However, as far as the author is aware and despite particular results in \cite{jiao,klim}, no existence result of a solution for \eqref{elf} exists in a general case. \\

The aim of this paper is to derive \textbf{sufficient conditions on $L$ so that \eqref{elf} admits a weak solution}. \\ 

Let us note that, in a more general setting, existence results for fractional equations is an emerging field. For instance, there are recent results about existence and uniqueness of solution for a class of fractional evolution equations in \cite{zhou,zhou2}.

\subsection{Main result}\label{section12}
We denote by $\Vert \cdot \Vert$ the Euclidean norm of $\R^d$ and $\CC := \CC ([a,b];\R^d)$ the space of continuous functions endowed with its usual norm $\Vert \cdot \Vert_{\infty} $. 

\begin{definition}
A function $u$ is said to be a weak solution of \eqref{elf} if $u \in \CC$ and if $u$ satisfies \eqref{elf} a.e. on $[a,b]$.
\end{definition}

Let us enunciate the main result of the paper:
\begin{theorem}\label{thmprincipal}
Let $L$ be a Lagrangian of class $\CC^1$ and $0 < (1/p) < \alpha < 1$. If $L$ satisfies the following hypotheses denoted by \eqref{h1}, \eqref{h2}, \eqref{h3}, \eqref{h4} and \eqref{h5}:
\begin{itemize}
\item there exist $0 \leq d_1 \leq p$ and $r_1$, $s_1 \in \CC(\R^d \times [a,b], \R^+)$ such that:
\begin{equation}\tag{H${}_1$}\label{h1}
\forall (x,y,t) \in \R^d \times \R^d \times [a,b], \; \vert L(x,y,t) - L(x,0,t) \vert \leq r_1 (x,t) \Vert y \Vert^{d_1}+s_1 (x,t);
\end{equation}
\item there exist $0 \leq d_2 \leq p$ and $r_2$, $s_2 \in \CC(\R^d \times [a,b], \R^+)$ such that:
\begin{equation}\tag{H${}_2$}\label{h2}
\forall (x,y,t) \in \R^d \times \R^d \times [a,b], \; \left\Vert \dfrac{\partial L}{\partial x} (x,y,t) \right\Vert \leq r_2 (x,t) \Vert y \Vert^{d_2}+s_2 (x,t);
\end{equation}
\item there exist $0 \leq d_3 \leq p-1$ and $r_3$, $s_3 \in \CC(\R^d \times [a,b], \R^+)$ such that:
\begin{equation}\tag{H${}_3$}\label{h3}
\forall (x,v,t) \in \R^d \times \R^d \times [a,b], \; \left\Vert \dfrac{\partial L}{\partial y} (x,y,t) \right\Vert \leq r_3 (x,t) \Vert y \Vert^{d_3}+s_3 (x,t);
\end{equation}
\item \textbf{coercivity condition}: there exist $\gamma > 0$, $1 \leq d_4 < p$, $c_1 \in \CC(\R^d \times [a,b], [\gamma,\infty[)$, $c_2$, $c_3 \in \CC([a,b], \R)$ such that:
\begin{equation}\tag{H${}_4$}\label{h4}
\forall (x,y,t) \in \R^d \times \R^d \times [a,b], \; L(x,y,t) \geq c_1 (x,t) \Vert y \Vert^p + c_2 (t) \Vert x \Vert^{d_4}+c_3(t);
\end{equation}
\item \textbf{convexity condition}:
\begin{equation}\tag{H${}_5$}\label{h5}
\forall t \in [a,b], \; L(\cdot,\cdot,t) \; \text{is convex},
\end{equation}
\end{itemize}
then \eqref{elf} admits a weak solution.
\end{theorem}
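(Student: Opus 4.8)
The plan is to apply the direct method of the calculus of variations in the space $\E$. The first task is to fix the functional-analytic framework, presumably already prepared in the earlier sections: note that $1/p < \alpha < 1$ forces $p > 1$, so that $\E$ is a reflexive Banach space for the norm $\Vert u \Vert_{\E} = \Vert u \Vert_{L^p} + \Vert \DM u \Vert_{L^p}$ (it is isomorphic to a closed subspace of $L^p \times L^p$ through $u \mapsto (u,\DM u)$), and, because $\alpha > 1/p$, the operator $I^\alpha_-$ maps $L^p$ boundedly and compactly into $\CC$. Since every $u \in \E$ satisfies $u = I^\alpha_- \DM u$, this yields both the continuous and the compact embedding $\E \hookrightarrow \CC$ together with the estimate $\Vert u \Vert_\infty \leq C \Vert \DM u \Vert_{L^p}$; in particular $\Vert \cdot \Vert_{\E}$ is equivalent to $\Vert \DM u \Vert_{L^p}$.

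First I would establish coercivity. Integrating \eqref{h4} and using $c_1 \geq \gamma$ gives $\LL(u) \geq \gamma \Vert \DM u \Vert_{L^p}^p + \int_a^b c_2(t)\Vert u(t)\Vert^{d_4}\,dt + \int_a^b c_3(t)\,dt$. The embedding bounds the middle term by $C \Vert \DM u \Vert_{L^p}^{d_4}$, and since $d_4 < p$ this is of lower order, whence $\LL(u) \to +\infty$ as $\Vert u \Vert_{\E} \to \infty$. Consequently any minimizing sequence $(u_n)$ is bounded in $\E$, and by reflexivity I extract a subsequence with $u_n \rightharpoonup u$ in $\E$, so that $\DM u_n \rightharpoonup \DM u$ in $L^p$ and, by the compact embedding, $u_n \to u$ uniformly on $[a,b]$.

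The central step is the sequential weak lower semicontinuity of $\LL$. I would exploit the convexity \eqref{h5}: for each $t$ the inequality $L(u_n,\DM u_n,t) \geq L(u_n,\DM u,t) + \frac{\partial L}{\partial y}(u_n,\DM u,t)\cdot(\DM u_n - \DM u)$ holds. Integrating, the first term converges to $\LL(u)$ by uniform convergence of $u_n$, the growth bound \eqref{h1} and dominated convergence; for the second term, \eqref{h3} with $d_3 \leq p-1$ shows that $\frac{\partial L}{\partial y}(u_n,\DM u,t)$ converges strongly in $L^{p'}$ (the conjugate exponent) to $\frac{\partial L}{\partial y}(u,\DM u,t)$, which paired with the weak convergence $\DM u_n - \DM u \rightharpoonup 0$ in $L^p$ forces the term to vanish in the limit. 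Thus $\LL(u) \leq \liminf \LL(u_n) = \inf_{\E}\LL$, so $u$ is a global minimum.

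Finally I would derive the Euler-Lagrange equation. Using \eqref{h2} and \eqref{h3} to dominate the difference quotients, $\LL$ is Gâteaux differentiable at $u$ with $D\LL(u)(v) = \int_a^b [\frac{\partial L}{\partial x}(u,\DM u,t)\cdot v + \frac{\partial L}{\partial y}(u,\DM u,t)\cdot \DM v]\,dt$, and minimality gives $D\LL(u)(v) = 0$ for all admissible $v$. Invoking the fractional integration by parts formula established earlier transfers $\DM$ onto the first factor and produces the operator $\DP$, after which a du Bois-Reymond type argument identifies $\frac{\partial L}{\partial y}(u,\DM u,t)$ as lying in the image of $I^\alpha_+$, so that $\DP(\frac{\partial L}{\partial y}(u,\DM u,t))$ exists, and the fundamental lemma of the calculus of variations yields \eqref{elf} almost everywhere; since $u \in \CC$, it is a weak solution. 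I expect the two genuine obstacles to be the weak lower semicontinuity — where the exponent bookkeeping between \eqref{h1}, \eqref{h3} and the conjugate exponent $p'$ must fit together so that the strong/weak pairing closes — and this last du Bois-Reymond step, which must upgrade the weak variational identity to a pointwise equation by producing the right fractional differentiability of $\frac{\partial L}{\partial y}$.
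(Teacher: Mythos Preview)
Your proposal is correct and follows the paper's architecture essentially step for step: reflexivity and the compact embedding $\E \hooktwoheadrightarrow \CC_0$, coercivity from \eqref{h4}, weak lower semicontinuity from convexity, then G\^ateaux differentiability and the fractional integration-by-parts/du Bois-Reymond argument to obtain \eqref{elf}.

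The one substantive local difference is in the lower-semicontinuity step. Under \eqref{h5} the paper linearizes $L$ \emph{jointly} in $(x,y)$ at the limit $(u,\DM u)$, giving
\[
\LL(u_n) \ \geq\ \LL(u) + \int_a^b \tfrac{\partial L}{\partial x}(u,\DM u,t)\cdot(u_n-u)\,dt + \int_a^b \tfrac{\partial L}{\partial y}(u,\DM u,t)\cdot(\DM u_n-\DM u)\,dt,
\]
and both correction integrals vanish directly (the first by $\partial_x L\in L^1$ and $u_n\to u$ in $\CC$, the second by $\partial_y L\in L^{p'}$ and weak $L^p$ convergence). You instead linearize only in $y$ at $(u_n,\DM u)$ and handle the $x$-dependence afterwards by dominated convergence, which requires the extra step of showing $\partial_y L(u_n,\DM u,t)\to \partial_y L(u,\DM u,t)$ strongly in $L^{p'}$. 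Both routes are valid; yours is marginally more work here but has the advantage that it uses only convexity in $y$, i.e.\ it already covers the weaker hypothesis \eqref{h5pp} that the paper treats separately by citing an external result.
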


Hypotheses denoted by \eqref{h1}, \eqref{h2}, \eqref{h3} are usually called \textbf{regularity hypotheses}, see \cite{cesa,daco2}. In Section~\ref{section5}, we prove that Hypothesis \eqref{h5} can be replaced by different convexity assumptions. 

\subsection{Idea of the proof of Theorem~\ref{thmprincipal}}\label{section13}
In the classical case $\alpha = 1$, $D^1_- = - D^1_+ = d/dt $ and consequently \eqref{elf} is nothing else but the classical Euler-Lagrange equation formulated in the 1750's. In this case, a lot of results of existence of solutions have been already proved. Let us recall that there exist different approaches:
\begin{itemize}
\item A first approach is to develop the classical Euler-Lagrange equation in order to obtain an implicit second order differential equation, see \cite{godb}. Then, under a \textit{hyper regularity} or \textit{non singularity} condition on the Lagrangian $L$, the equation can be written as an explicit second order differential equation and the Cauchy-Lipschitz theorem gives the existence of local or global regular solutions;
\item A second approach consists in using the variational structure of the equation, see \cite{daco2}. Indeed, under some assumptions, the critical points of $\LL$ correspond to the solutions of the classical Euler-Lagrange equation. The idea is then to prove the existence of critical points of $\LL$. In this way, author makes some assumptions (like coercivity and convexity of the Lagrangian $L$) ensuring the existence of extrema of $\LL$. With this second method, author has to use reflexive spaces of functions and consequently, he deals with weak solutions (in a specific sense).
\end{itemize}
In order to prove Theorem~\ref{thmprincipal}, we extend the second approach to the \textbf{strict fractional case} (\textit{i.e.} $0<\alpha <1$). Indeed, although there exist fractional versions of the Cauchy-Lipschitz theorem (see \cite{kilb,samk}), there is no simple rules for the fractional derivative of a composition and consequently, we can not write \eqref{elf} in a simpler way. Hence, in the strict fractional case, we can not follow the first method. \\

Theorem~\ref{thmprincipal} is based on the following preliminaries:
\begin{itemize}
\item The introduction in Section~\ref{section3} of an appropriate reflexive separable Banach space $\E$ (see \eqref{eqdefE});
\item Assuming Hypotheses \eqref{h1}, \eqref{h2} and \eqref{h3}, Theorem~\ref{thm1} in Section~\ref{section4} states that if $u$ is a critical point of $\LL$, then $u$ is a weak solution of \eqref{elf};
\item Assuming additionally Hypotheses \eqref{h4} and \eqref{h5}, Theorem~\ref{thm2} in Section~\ref{section5} states that $\LL$ admits a global minimizer.
\end{itemize}
Hence, the proof of Theorem~\ref{thmprincipal} is complete. Let us note that the method developed in this paper is inspired by:
\begin{itemize}
\item the reflexive separable Banach space introduced in \cite{jiao} allowing to prove the existence of a weak solution for a class of fractional boundary value problems;
\item the suitable hypotheses of regularity, coercivity and convexity given in \cite{daco2} proving the existence of a weak solution for classical Euler-Lagrange equations (\textit{i.e.} in the case $\alpha =1$).
\end{itemize}

\subsection{Organisation of the paper}\label{section14}
The paper is organized as follows. In Section~\ref{section2}, some usual notations of spaces of functions are given. We recall the definitions of the fractional operators of Riemann-Liouville and some of their properties. Section~\ref{section3} is devoted to the introduction and to the study of the appropriate reflexive separable Banach space $\E$. In Section~\ref{section4}, the variational structure of \eqref{elf} is considered and we prove Theorem~\ref{thm1}. In Section~\ref{section5}, we prove Theorem~\ref{thm2}. Then, Section~\ref{section6} is devoted to some examples. Finally, a conclusion ends this paper.

\section{Reminder about fractional calculus}\label{section2}
\subsection{Some spaces of functions}\label{section21}
For any $ p \geq 1$, $\L^p := \L^p \big( (a,b);\R^d \big)$ denotes the classical Lebesgue space of $p$-integrable functions endowed with its usual norm $\Vert \cdot \Vert_{\L^p} $. Let us give some usual notations of spaces of continuous functions defined on $[a,b]$ with values in $\R^d$:
\begin{itemize}
\item $AC := AC ([a,b];\R^d)$ the space of absolutely continuous functions;
\item $\CC^\infty := \CC^\infty ([a,b];\R^d)$ the space of infinitely differentiable functions;
\item $\CC^\infty_c := \CC^\infty_c ([a,b];\R^d)$ the space of infinitely differentiable functions and compactly supported in $]a,b[$.
\end{itemize}
We remind that a function $f$ is an element of $AC$ if and only if $\dot{f} \in \L^1$ and the following equality holds:
\begin{equation}
\forall t \in [a,b], \; f(t)=f(a) + \di \int_a^t \dot{f}(\xi) \; d\xi ,
\end{equation}
where $\dot{f}$ denotes the derivative of $f$. We refer to \cite{kolm} for more details concerning the absolutely continuous functions. \\

Finally, we denote by $\CC_{a}$ (resp. $AC_a$ or $\CC^\infty_a$) the space of functions $f \in \CC$ (resp. $AC$ or $\CC^\infty$) such that $f(a) = 0$. In particular, $ \CC^\infty_c \subset \CC^\infty_a \subset AC_a$. \\

\textbf{Convention:} in the whole paper, an equality between functions must be understood as an equality holding for almost all $t \in (a,b)$. When it is not the case, the interval on which the equality is valid will be specified.

\subsection{Fractional operators of Riemann-Liouville}\label{section22}
Since 1695, numerous notions of fractional operators emerged over the year, see \cite{kilb,podl,samk}. In this paper, we only deal with the fractional operators of Riemann-Liouville (1847) whose definitions and some basic results are reminded in this section. We refer to \cite{kilb,samk} for the omitted proofs. \\
 
Let $\alpha > 0$ and $f$ be a function defined a.e. on $(a,b)$ with values in $\R^d$. The left (resp. right) fractional integral in the sense of Riemann-Liouville with inferior limit $a$ (resp. superior limit $b$) of order $ \alpha $ of $f$ is given by:
\begin{equation}
\forall t \in ]a,b], \; I^{\alpha}_- f (t) := \dfrac{1}{\Gamma (\alpha)} \di \int_a^t (t-\xi)^{\alpha -1} f(\xi) \; d\xi,
\end{equation}
respectively:
\begin{equation}
\forall t \in [a,b[, \; I^{\alpha}_+ f (t) := \dfrac{1}{\Gamma (\alpha)} \di \int_t^b (\xi-t)^{\alpha -1} f(\xi) \; d\xi,
\end{equation}
where $\Gamma$ denotes the Euler's Gamma function. If $f \in \L^1$, then $I^{\alpha}_- f$ and $I^{\alpha}_+ f$ are defined a.e. on $(a,b)$. \\

Now, let us consider $ 0 < \alpha < 1$. The left (resp. right) fractional derivative in the sense of Riemann-Liouville with inferior limit $a$ (resp. superior limit $b$) of order $\alpha$ of $f$ is given by:
\begin{equation}\label{eq211}
\forall t \in ]a,b], \; \DM f(t) := \dfrac{d}{dt} \big( I^{1-\alpha}_- f \big) (t) \quad \Big( \text{resp.} \quad \forall t \in [a,b[, \; \DP f(t) := -\dfrac{d}{dt} \big( I^{1-\alpha}_+ f \big) (t) \Big).
\end{equation}
From \cite[Corollary 2.2, p.73]{kilb}, if $f \in AC$, then $\DM f$ and $\DP f$ are defined a.e. on $(a,b)$ and satisfy:
\begin{equation}\label{eq21}
\DM f = I^{1-\alpha}_- \dot{f} + \dfrac{f(a)}{(t-a)^\alpha \Gamma (1-\alpha)} \quad \text{and} \quad \DP f = - I^{1-\alpha}_+ \dot{f} + \dfrac{f(b)}{(b-t)^\alpha \Gamma (1-\alpha)}.
\end{equation}
In particular, if $f \in AC_a$, then $\DM f = I^{1-\alpha}_- \dot{f}$.

\subsection{Some properties of the fractional operators}\label{section23} 
In this section, we provide some properties concerning the left fractional operators of Riemann-Liouville. One can easily derive the analogous versions for the right ones. Properties~\ref{property1}, \ref{property2} and \ref{property3} are well-known and one can find their proofs in the classical literature on the subject (see \cite[Lemma 2.3, p.73]{kilb}, \cite[Lemma 2.1, p.72]{kilb} and \cite[Lemma 2.7, p.76]{kilb} respectively). \\

The first result yields the semi-group property of the left Riemann-Liouville fractional integral:
\begin{property}\label{property1}
For any $\alpha$, $\beta > 0$ and any function $f \in \L^1$, the following equality holds:
\begin{equation}
I^\alpha_- \circ I^\beta_- f = I^{\alpha+\beta}_- f .
\end{equation}
\end{property}
From Property~\ref{property1} and Equalities \eqref{eq211} and \eqref{eq21}, one can easily deduce the following results concerning the composition between fractional integral and fractional derivative. For any $0 < \alpha <1$, the following equalities hold:
\begin{equation}\label{eq24}
\forall f \in \L^1, \; D^\alpha_- \circ I^\alpha_- f = f \quad \text{and} \quad \forall f \in AC, \; I^\alpha_- \circ \DM  f= f.
\end{equation}

Another classical result is the boundedness of the left fractional integral from $\L^p$ to $\L^p$:
\begin{property}\label{property2}
For any $\alpha >0$ and any $p \geq 1$, $I^\alpha_-$ is linear and continuous from $\L^p$ to $\L^p$. Precisely, the following inequality holds:
\begin{equation}
\forall f \in \L^p, \; \Vert I^\alpha_- f \Vert_{\L^p} \leq \dfrac{(b-a)^\alpha}{\Gamma (1+\alpha)} \Vert f \Vert_{\L^p}.
\end{equation} 
\end{property}

The following classical property concerns the integration of fractional integrals. It is occasionally called \textit{fractional integration by parts}: 

\begin{property}\label{property3}
Let $0 < \alpha < 1$. Let $f \in \L^p$ and $g \in \L^q$ where $(1/p)+(1/q) \leq 1 + \alpha$ (and $p \neq 1 \neq q$ in the case $(1/p)+(1/q) = 1 + \alpha$). Then, the following equality holds:
\begin{equation}
\di \int_a^b I^\alpha_- f \cdot g \; dt = \di \int_a^b f \cdot I^\alpha_+ g \; dt.
\end{equation}
\end{property}

This change of side of the fractional integral (from $I^\alpha_- $ to $I^\alpha_+ $) is responsible of the emergence of $D^\alpha_+ $ in \eqref{elf} although only $D^\alpha_- $ is involved in the Lagrangian functional $\LL$. We refer to Section~\ref{section42} for more details. \\

The following Property~\ref{property4} completes Property~\ref{property2} in the case $0 < (1/p) < \alpha < 1$: indeed, in this case, $I^\alpha_- $ is additionally bounded from $\L^p$ to $\CC_a$:

\begin{property}\label{property4}
Let  $0 < (1/p) < \alpha < 1$ and $q=p/(p-1)$. Then, for any $f \in \L^p$, we have:
\begin{itemize}
\item $I^\alpha_- f$ is Hold\"er continuous on $]a,b]$ with exponent $ \alpha - (1/p) > 0$;
\item $\lim\limits_{t \to a} I^\alpha_- f (t) = 0$.
\end{itemize}
Consequently, $I^\alpha_- f$ can be continuously extended by $0$ in $t=a$. Finally, for any $f \in \L^p$, we have $I^\alpha_- f \in \CC_a$. Moreover, the following inequality holds:
\begin{equation}
\forall f \in \L^p, \; \Vert I^\alpha_- f \Vert_{\infty} \leq  \dfrac{(b-a)^{\alpha -(1/p)}}{\Gamma (\alpha) \big( (\alpha -1) q + 1\big)^{1/q} } \Vert f \Vert_{\L^p} .
\end{equation}
\end{property}

\begin{proof}
Let us note that this result is mainly proved in \cite{jiao}. Let $f \in \L^p$. We first remind the following inequality:
\begin{equation}
\forall \xi_1 \geq \xi_2 \geq 0, \; (\xi_1-\xi_2)^q \leq \xi_1^q - \xi_2^q.
\end{equation}
Let us prove that $I^\alpha_- f$ is Hold\"er continuous on $]a,b]$. For any $a < t_1 < t_2 \leq b$, using the H\"older's inequality, we have:
\begin{eqnarray*}
\Vert I^\alpha_- f(t_2) - I^\alpha_- f(t_1) \Vert & = & \dfrac{1}{\Gamma (\alpha)} \left\Vert \di \int_a^{t_2} (t_2 - \xi )^{\alpha -1} f(\xi) \; d\xi - \int_a^{t_1} (t_1 - \xi )^{\alpha -1} f(\xi) \; d\xi \right\Vert \\
& \leq & \dfrac{1}{\Gamma (\alpha)} \left\Vert \di \int_{t_1}^{t_2} (t_2 - \xi )^{\alpha -1} f(\xi) \; d\xi \right\Vert \\ 
& & \qquad \qquad \qquad + \dfrac{1}{\Gamma (\alpha)} \left\Vert \di \int_a^{t_1} \big( (t_2 - \xi )^{\alpha -1} - (t_1 - \xi )^{\alpha -1} \big) f(\xi) \; d\xi \right\Vert   \\
& \leq & \dfrac{\Vert f \Vert_{\L^p}}{\Gamma (\alpha)} \left( \di \int_{t_1}^{t_2} (t_2 -\xi)^{(\alpha -1)q} \; d\xi \right)^{1/q} \\ 
& & \qquad \qquad \qquad + \dfrac{\Vert f \Vert_{\L^p}}{\Gamma (\alpha)} \left( \di \int_{a}^{t_1} \big( (t_1 -\xi)^{\alpha -1} - (t_2 -\xi)^{\alpha -1} \big)^q \; d\xi \right)^{1/q} \\
& \leq & \dfrac{\Vert f \Vert_{\L^p}}{\Gamma (\alpha)} \left( \di \int_{t_1}^{t_2} (t_2 -\xi)^{(\alpha -1)q} \; d\xi \right)^{1/q} \\ 
& & \qquad \qquad \qquad + \dfrac{\Vert f \Vert_{\L^p}}{\Gamma (\alpha)} \left( \di \int_a^{t_1} (t_1 -\xi) ^{(\alpha -1)q} - (t_2 -\xi) ^{(\alpha -1)q} \; d\xi \right) ^{1/q} \\
& \leq & \dfrac{2 \Vert f \Vert_{\L^p}}{\Gamma (\alpha) \big( (\alpha -1) q + 1\big)^{1/q} } (t_2 - t_1)^{\alpha -(1/p)}.
\end{eqnarray*}
The proof of the first point is complete. Let us consider the second point. For any $t \in ]a,b]$, we can prove in the same manner that:
\begin{equation}
\Vert I^\alpha_- f(t) \Vert \leq \dfrac{\Vert f \Vert_{\L^p}}{\Gamma (\alpha) \big( (\alpha -1) q + 1\big)^{1/q} } (t-a)^{\alpha -(1/p)} \xrightarrow[t \to a]{} 0.
\end{equation}
The proof is now complete.
\end{proof}

\section{Space of functions $\E$}\label{section3}
In order to prove the existence of a weak solution of \eqref{elf} using a variational method, we need the introduction of an appropriate space of functions. This space has to present some properties like reflexivity, see \cite{daco2}. \\

For any $0 < \alpha < 1$ and any $ p \geq 1$, we define the following space of functions:
\begin{equation}\label{eqdefE}
\E := \{ u \in \L^p \; \text{satisfying} \; \DM u \in \L^p \; \text{and} \; I^\alpha_- \circ D^\alpha_- u = u \; \; \text{a.e.} \}.
\end{equation}
We endow $\E$ with the following norm:  
\begin{equation}
\fonction{\Vert \cdot \Vert_{\alpha,p}}{\E}{\R^+}{u}{\big( \Vert u \Vert_{\L^p}^p  + \Vert \DM u \Vert_{\L^p}^p \big)^{1/p}.}
\end{equation}
Let us note that:
\begin{equation}
\fonction{\vert \cdot \vert_{\alpha,p}}{\E}{\R^+}{u}{\Vert D^\alpha_- u \Vert_{\L^p}} 
\end{equation}
is an equivalent norm to $\Vert \cdot \Vert_{\alpha,p}$ for $\E$. Indeed, Property~\ref{property2} leads to:
\begin{equation}\label{eq54}
\forall u \in \E, \; \Vert u \Vert_{\L^p} = \Vert I^\alpha_- \circ D^\alpha_- u \Vert_{\L^p} \leq \dfrac{(b-a)^\alpha}{\Gamma (1+\alpha)} \Vert \DM u \Vert_{\L^p}.
\end{equation}
The goal of this section is to prove the following proposition:
\begin{proposition}\label{prop556}
Assuming $0 < (1/p) < \alpha < 1$, $\E$ is a reflexive separable Banach space and the compact embedding $\E \hooktwoheadrightarrow \CC_a$ holds. 
\end{proposition}

Then, in the rest of the paper, we consider:
\begin{equation}
0 < (1/p) < \alpha < 1 \quad \text{and} \quad q=p/(p-1).
\end{equation}
Let us detail the different points of Proposition~\ref{prop556} in the following subsections.

\subsection{$\E$ is a reflexive separable Banach space}\label{section32}
Let us prove this property. Let us consider $(\L^p)^2$ the set $\L^p \times \L^p$ endowed with the norm $\Vert (u,v) \Vert_{(\L^p)^2} = (\Vert u \Vert_{\L^p}^p + \Vert v \Vert_{\L^p}^p )^{1/p}$. Since $p >1$, $(\L^p, \Vert \cdot \Vert_{\L^p})$ is a reflexive separable Banach space and therefore, $\big((\L^p)^2, \Vert \cdot \Vert_{(\L^p)^2}\big)$ is also a reflexive separable Banach space. \\

We define $\Omega := \{ (u,\DM u ), \; u \in \E \}$. Let us prove that $\Omega$ is a closed subspace of $\big((\L^p)^2, \Vert \cdot \Vert_{(\L^p)^2}\big)$. Let $(u_n,v_n)_{n \in \N} \subset \Omega$ such that:
\begin{equation}
(u_n,v_n) \xrightarrow[]{(\L^p)^2} (u,v).
\end{equation}
Let us prove that $(u,v) \in \Omega$. For any $n \in \N$, $(u_n,v_n) \in \Omega$. Thus, $u_n \in \E$ and $v_n = \DM u_n$. Consequently, we have:
\begin{equation}
u_n \xrightarrow[]{\L^p} u \quad \text{and} \quad D^\alpha_- u_n \xrightarrow[]{\L^p} v.
\end{equation}
For any $n \in \N$, since $u_n \in \E$ and $I^\alpha_-$ is continuous from $\L^p$ to $\L^p$, we have:
\begin{equation}
u_n = I^\alpha_- \circ D^\alpha_- u_n \xrightarrow[]{\L^p} I^\alpha_- v.
\end{equation}
Thus, $u= I^\alpha_- v$, $D^\alpha_- u = D^\alpha_- \circ I^\alpha_- v = v \in \L^p$ and $I^\alpha_- \circ D^\alpha_- u = I^\alpha_- v = u$. Hence, $u \in \E$ and $(u,v) = (u,\DM u) \in \Omega$. In conclusion, $\Omega$ is a closed subspace of $\big((\L^p)^2, \Vert \cdot \Vert_{(\L^p)^2}\big)$ and then $\Omega$ is a reflexive separable Banach space. Finally, defining the following operator:
\begin{equation}
\fonction{A}{\E}{\Omega}{u}{(u,\DM u),}
\end{equation}
we prove that $\E$ is isometric isomorphic to $\Omega$. This completes the proof of Section~\ref{section32}.

\subsection{The continuous embedding $\E \hookrightarrow \CC_a$}\label{section33b}
Let us prove this result. Let $u \in \E$ and then $D^\alpha_- u \in \L^p$. Since $ 0 < (1/p) < \alpha < 1 $, Property~\ref{property4} leads to $I^\alpha_- \circ D^\alpha_- u \in \CC_a$. Furthermore, $u = I^\alpha_- \circ D^\alpha_- u$ and consequently, $u$ can be identified to its continuous representative. Finally, Property~\ref{property4} also gives:
\begin{equation}
\forall u \in \E, \; \Vert u \Vert_{\infty} = \Vert I^\alpha_- \circ D^\alpha_- u \Vert_{\infty} \leq  \dfrac{(b-a)^{\alpha -(1/p)}}{\Gamma (\alpha) \big( (\alpha -1) q + 1\big)^{1/q} } \vert u \vert_{\alpha,p}.
\end{equation}
Since $ \Vert \cdot \Vert_{\alpha,p}$ and $ \vert \cdot \vert_{\alpha,p}$ are equivalent norms, the proof of Section~\ref{section33b} is complete.

\subsection{The compact embedding $\E \hooktwoheadrightarrow \CC_a$}\label{section34}
Let us prove this property. Since $\E$ is a reflexive Banach space, we only have to prove that:
\begin{equation}\forall (u_n)_{n \in \N} \subset \E \; \text{such that} \; u_n \xrightharpoonup[]{\E} u, \; \text{then} \; u_n  \xrightarrow[]{\CC}  u.
\end{equation}
Let $(u_n)_{n \in \N} \subset \E$ such that:
\begin{equation}
u_n \xrightharpoonup[]{\E} u.
\end{equation}
Since $\E \hookrightarrow \CC_a$, we have:
\begin{equation}
u_n \xrightharpoonup[]{\CC} u.
\end{equation}
Since $(u_n)_{n \in \N}$ converges weakly in $\E$, $(u_n)_{n \in \N}$ is bounded in $\E$. Consequently, $(\DM u_n)_{n \in \N}$ is bounded in $\L^p$ by a constant $M \geq 0$. Let us prove that $(u_n)_{n \in \N} \subset \CC_a$ is uniformly lipschitzian on $[a,b]$. According to the proof of Property~\ref{property4}, we have:
\begin{eqnarray*}
\forall n \in \N, \; \forall a \leq t_1 < t_2 \leq b, \; \Vert u_n (t_2) - u_n(t_1) \Vert & \leq  &\Vert I^\alpha_- \circ \DM u_n (t_2) - I^\alpha_- \circ \DM u_n(t_1) \Vert \\
& \leq & \dfrac{2 \Vert \DM u_n \Vert_{\L^p}}{\Gamma (\alpha) \big( (\alpha -1) q +1 \big)^{1/p}} (t_2 -t_1)^{\alpha -(1/p)} \\
& \leq & \dfrac{2M}{\Gamma (\alpha) \big( (\alpha -1) q +1 \big)^{1/p}} (t_2 -t_1)^{\alpha -(1/p)} .
\end{eqnarray*}
Hence, from Ascoli's theorem, $(u_n)_{n \in \N}$ is relatively compact in $\CC$. Consequently, there exists a subsequence of $(u_n)_{n \in \N}$ converging strongly in $\CC$ and the limit is $u$ by uniqueness of the weak limit. \\

Now, let us prove by contradiction that the whole sequence $(u_{n})_{n \in \N}$ converges strongly to $u$ in $\CC$. If not, there exist $\varepsilon >0$ and a subsequence $(u_{n_k})_{k \in \N}$ such that:
\begin{equation}\label{eq31}
\forall k \in \N, \; \Vert u_{n_k} - u \Vert_{\infty} > \varepsilon > 0.
\end{equation}
Nevertheless, since $(u_{n_k})_{k \in \N}$ is a subsequence of $(u_{n})_{n \in \N}$, then it satisfies:
\begin{equation}
u_{n_k} \xrightharpoonup[]{\E} u.
\end{equation}
In the same way (using Ascoli's theorem), we can construct a subsequence of $(u_{n_k})_{k \in \N}$ converging strongly to $u$ in $\CC$ which is a contradiction to \eqref{eq31}. The proof of Section~\ref{section34} is now complete.

\subsection{Remarks}\label{section35}
Let us remind the following property:
\begin{equation}
\forall \varphi \in \CC^\infty_c, \; I^\alpha_- \varphi \in \CC^\infty_a.
\end{equation}
From this result, we get the two following results:
\begin{itemize}
\item $\CC^\infty_a$ is dense in $\E$. Indeed, let us first prove that $\CC^\infty_a \subset \E$. Let $u \in \CC^\infty_a \subset \L^p$. Since $u \in AC_a$ and $\dot{u} \in \L^p$, we have $\DM u = I^{1-\alpha}_- \dot{u} \in \L^p$. Since $u \in AC$, we also have $I^\alpha_- \circ \DM u = u$. Finally, $u \in \E$. Now, let us prove that $\CC^\infty_a$ is dense in $\E$. Let $u \in \E$, then $\DM u \in \L^p$. Consequently, there exists $(v_n)_{n \in \N} \subset \CC^\infty_c$ such that:
\begin{equation}
v_n \xrightarrow[]{\L^p} \DM u \quad \text{and then} \quad I^\alpha_- v_n \xrightarrow[]{\L^p} I^\alpha_- \circ \DM u = u,
\end{equation}
since $I^\alpha_-$ is continuous from $\L^p$ to $\L^p$. Defining $u_n := I^\alpha_- v_n \in \CC^\infty_a $ for any $n \in \N$, we obtain:
\begin{equation}
u_n \xrightarrow[]{\L^p} u \quad \text{and} \quad \DM u_n = \DM \circ I^\alpha_- v_n = v_n \xrightarrow[]{\L^p} \DM u.
\end{equation}
Finally, $(u_n)_{n \in \N} \subset \CC^\infty_a $ and converges to $u$ in $\E$. The proof of this point is complete; \\
\item In the case $(1/p) < \min (\alpha,1-\alpha)$, $\E = \{ u \in \L^p \; \text{satisfying} \; \DM u \in \L^p  \}$. Indeed, let $u \in \L^p$ satisfying $ D^\alpha_- u \in \L^p$ and let us prove that $I^\alpha_- \circ D^\alpha_- u =u $. Let $\varphi \in \CC^\infty_c \subset \L^1$. Since $\DM u \in \L^p$, Property~\ref{property3} leads to:
\begin{equation}
\di \int_a^b I^\alpha_- \circ \DM u \cdot \varphi \; dt = \di \int_a^b \DM u \cdot I^\alpha_+ \varphi \; dt = \di \int_a^b \dfrac{d}{dt} (I^{1-\alpha}_- u) \cdot I^\alpha_+ \varphi \; dt.
\end{equation}
Then, an integration by parts gives:
\begin{equation}
\di \int_a^b I^\alpha_- \circ \DM u \cdot \varphi \; dt = \di \int_a^b I^{1-\alpha}_- u \cdot D^{1-\alpha}_+ \varphi \; dt.
\end{equation}
Indeed, $I^{\alpha}_+ \varphi (b) = 0$ since $\varphi \in \CC^\infty_c$ and $I^{1-\alpha}_- u (a) = 0$ since $u \in \L^p$ and $ (1/p) < 1 - \alpha $. Finally, using Property~\ref{property3} again, we obtain:
\begin{equation}
\di \int_a^b I^\alpha_- \circ \DM u \cdot \varphi \; dt = \di \int_a^b  u \cdot I^{1-\alpha}_+ \circ D^{1-\alpha}_+ \varphi \; dt = \di \int_a^b  u \cdot \varphi \; dt,
\end{equation}
which concludes the proof of this second point. In this case, let us note that such a definition of $\E$ could lead us to name it \textit{fractional Sobolev space} and to denote it by $\W^{\alpha,p}$. Nevertheless, these notion and notation are already used, see \cite{brez}.
\end{itemize}

\section{Variational structure of \eqref{elf}}\label{section4}
In the rest of the paper, we assume that Lagrangian $L$ is of class $\CC^1$ and we define the Lagrangian functional $\LL$ on $\E$ (with $0<(1/p)<\alpha <1$). Precisely, we define:
\begin{equation}
\fonction{\LL}{\E}{\R}{u}{\di \int_a^b L(u,\DM u,t) \; dt.}
\end{equation}
$\LL$ is said to be G\^ateaux-differentiable in $u \in \E$ if the map:
\begin{equation}
\fonction{D\LL (u)}{\E}{\R}{v}{D\LL (u)(v) := \lim\limits_{h \to 0} \dfrac{\LL (u+hv)- \LL(u)}{h} }
\end{equation}
is well-defined for any $v \in \E$ and if it is linear and continuous. A critical point $u \in \E$ of $\LL$ is defined by $D\LL (u)=0$.

\subsection{G\^ateaux-differentiability of $\LL$}\label{section41}
Let us prove the following lemma:
\begin{lemma}\label{lem1}
The following implications hold:
\begin{itemize}
\item $L$ satisfies \eqref{h1} $\Longrightarrow$  for any $u \in \E$, $L(u,\DM u,t) \in \L^1$ and then $\LL (u)$ exists in $ \R$;
\item $L$ satisfies \eqref{h2} $\Longrightarrow$ for any $u \in \E$, $ \partial L / \partial x (u,\DM u,t) \in \L^1$;
\item $L$ satisfies \eqref{h3} $\Longrightarrow$ for any $u \in \E$, $ \partial L / \partial y (u,\DM u,t) \in \L^q$.
\end{itemize}
\end{lemma}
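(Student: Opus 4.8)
The plan is to handle the three implications in parallel, since they all rest on the same two ingredients: the continuous embedding $\E \hookrightarrow \CC_0$ from Section \ref{section33b}, and the elementary inclusion of Lebesgue spaces on the finite interval $(a,b)$. First I would fix $u \in \E$ and invoke $\E \hookrightarrow \CC_0$ to identify $u$ with its continuous representative on $[a,b]$. Then $u([a,b])$ is compact and $t \mapsto (u(t),t)$ is continuous from $[a,b]$ into $\R^d \times [a,b]$ with compact image, so that for each $i$ the functions $t \mapsto r_i(u(t),t)$ and $t \mapsto s_i(u(t),t)$ are continuous on the compact $[a,b]$ and hence bounded, say by constants $R_i$ and $S_i$; the same holds for $t \mapsto L(u(t),0,t)$ since $L$ is of class $\CC^1$. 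This converts each of \eqref{h1}, \eqref{h2}, \eqref{h3} into a pointwise estimate with constant coefficients.

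The second ingredient I would isolate as a one-line observation: since $\DM u \in L^p$ and $(a,b)$ has finite measure, one has $\Vert \DM u \Vert^{d} \in L^1$ whenever $0 \leq d \leq p$, because $\Vert \DM u \Vert^{d} \leq 1 + \Vert \DM u \Vert^{p}$ pointwise. With this, the first implication follows by writing $L(u,\DM u,t) = \big( L(u,\DM u,t) - L(u,0,t) \big) + L(u,0,t)$, bounding the bracket by $R_1 \Vert \DM u \Vert^{d_1} + S_1$ through \eqref{h1} and the second summand by a constant; since $d_1 \leq p$ the dominating function lies in $L^1$, whence $L(u,\DM u,t) \in L^1$ and $\LL(u) = \int_a^b L(u,\DM u,t)\,dt$ is a well-defined real number. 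The second implication is the same argument applied to \eqref{h2}, giving $\Vert \partial L / \partial x (u,\DM u,t) \Vert \leq R_2 \Vert \DM u \Vert^{d_2} + S_2 \in L^1$ because $d_2 \leq p$.

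The third implication is the delicate one, and the place where the sharp exponent in \eqref{h3} must be used, so I would flag it as the crux. Here the target is $L^q$ rather than $L^1$, and \eqref{h3} yields $\Vert \partial L / \partial y (u,\DM u,t) \Vert \leq R_3 \Vert \DM u \Vert^{d_3} + S_3$; membership in $L^q$ reduces to $\int_a^b \Vert \DM u \Vert^{d_3 q}\,dt < \infty$. The exponent bookkeeping is exactly the point: with $q = p/(p-1)$ and the hypothesis $d_3 \leq p-1$ one obtains $d_3 q \leq (p-1)\cdot \tfrac{p}{p-1} = p$, so the previous observation applied with exponent $d_3 q \leq p$ gives $\Vert \DM u \Vert^{d_3} \in L^q$, while the constant $S_3$ is trivially in $L^q$ on the finite interval. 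Hence the dominating function lies in $L^q$ and $\partial L / \partial y (u,\DM u,t) \in L^q$. I expect no real difficulty beyond this computation, which is precisely what forces the constraint $d_3 \leq p-1$ (rather than $d_3 \leq p$) and which will later allow $\partial L / \partial y (u,\DM u,t)$ to be paired against elements of $L^p$ in the integration-by-parts step leading to Theorem \ref{thm1}.
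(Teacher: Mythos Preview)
Your proposal is correct and follows essentially the same approach as the paper: the paper also uses $u\in\E\subset\CC_0$ to make $r_i(u,t)$, $s_i(u,t)$, $L(u,0,t)$ continuous (hence bounded) on $[a,b]$, and then the inclusions $\Vert \DM u\Vert^{d_i}\in L^{p/d_i}\subset L^1$ for $i=1,2$ and $\Vert \DM u\Vert^{d_3}\in L^{p/d_3}\subset L^q$, which is exactly your exponent computation $d_3 q\le p$ phrased via nested Lebesgue spaces on a finite interval.
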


\begin{proof}
Let us assume that $L$ satisfies \eqref{h1} and let $u \in \E \subset \CC_a$. Then, $\Vert \DM u \Vert^{d_1} \in \L^{p/d_1} \subset \L^1$ and the three maps $t \longrightarrow r_1 \big(u(t),t\big)$, $s_1 \big(u(t),t\big)$, $\vert L\big(u(t),0,t\big) \vert \in \CC([a,b],\R^+) \subset \L^\infty \subset \L^1$. Hypothesis \eqref{h1} implies for almost all $t \in [a,b]$:
\begin{equation}
\vert L(u(t),\DM u (t),t) \vert \leq r_1 (u(t),t) \Vert \DM u(t) \Vert^{d_1}+s_1 (u(t),t) + \vert L(u(t),0,t) \vert.
\end{equation}
Hence, $L(u,\DM u,t) \in \L^1$ and then $\LL (u)$ exists in $\R$. We proceed in the same manner in order to prove the second point of Lemma~\ref{lem1}. Now, assuming that $L$ satisfies \eqref{h3}, we have $\Vert \DM u \Vert^{d_3} \in \L^{p/d_3} \subset \L^q$ for any $u \in \E$. An analogous argument gives the third point of Lemma~\ref{lem1}.
\end{proof}

Let us prove the following result:

\begin{proposition}\label{prop1}
Assuming that $L$ satisfies Hypotheses \eqref{h1}, \eqref{h2} and \eqref{h3}, $\LL$ is G\^ateaux-differentiable in any $u \in \E$ and:
\begin{equation}
\forall u, v \in \E, \; D\LL (u)(v) = \di \int_a^b \dfrac{\partial L}{\partial x} (u,\DM u,t) \cdot v + \dfrac{\partial L}{\partial y} (u,\DM u,t) \cdot \DM v \; dt.
\end{equation}
\end{proposition}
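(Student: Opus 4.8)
The plan is to fix $u,v \in \E$ and compute the limit of the difference quotient $\big(\LL(u+hv)-\LL(u)\big)/h$ directly, justifying the exchange of limit and integral by dominated convergence. The first step is to use the linearity of $\DM$ to write $\DM(u+hv)=\DM u+h\,\DM v$, so that the integrand of $\LL(u+hv)$ is $L(u+hv,\DM u+h\,\DM v,t)$. Since $L$ is of class $\CC^1$, the fundamental theorem of calculus applied to $\theta\mapsto L(u+\theta h v,\DM u+\theta h\,\DM v,t)$ gives, writing $w_\theta:=(u+\theta h v,\DM u+\theta h\,\DM v,t)$,
\[
\frac{L(u+hv,\DM u+h\,\DM v,t)-L(u,\DM u,t)}{h}=\di\int_0^1\left[\frac{\partial L}{\partial x}(w_\theta)\cdot v+\frac{\partial L}{\partial y}(w_\theta)\cdot \DM v\right]d\theta .
\]
As $h\to 0$, continuity of the partial derivatives of $L$ shows that the bracketed term converges, uniformly in $\theta\in[0,1]$, to $\frac{\partial L}{\partial x}(u,\DM u,t)\cdot v+\frac{\partial L}{\partial y}(u,\DM u,t)\cdot \DM v$ for a.e.\ $t$; this is the desired pointwise limit.

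The main obstacle, and the step where Hypotheses \eqref{h2} and \eqref{h3} are essential, is to produce an $h$-independent $L^1$ dominating function. Here I would restrict to $|h|\leq 1$ and use that $u,v\in\E\subset\CC_0$ are bounded, so $u+\theta h v$ stays in a fixed compact ball of $\R^d$ for all $\theta\in[0,1]$, $|h|\leq 1$; by continuity, $r_2,s_2,r_3,s_3$ evaluated along these arguments are bounded by constants, and $\|\DM u+\theta h\,\DM v\|\leq \|\DM u\|+\|\DM v\|$. Hypothesis \eqref{h2} then bounds the first term by a constant multiple of $\|v\|_\infty\big((\|\DM u\|+\|\DM v\|)^{d_2}+1\big)$, which lies in $L^1$ because $d_2\leq p$ forces $(\|\DM u\|+\|\DM v\|)^{d_2}\in L^{p/d_2}\subset L^1$. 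Hypothesis \eqref{h3} bounds the second term by a constant multiple of $\|\DM v\|\big((\|\DM u\|+\|\DM v\|)^{d_3}+1\big)$; this is in $L^1$ precisely because $d_3\leq p-1$, so that by H\"older $\|\DM v\|\cdot(\|\DM u\|+\|\DM v\|)^{d_3}$ is the product of an $L^p$ and an $L^{p/d_3}$ function with $\tfrac{1}{p}+\tfrac{d_3}{p}\leq 1$. Summing these bounds gives a dominating function, and dominated convergence yields the stated formula for $D\LL(u)(v)$.

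It remains to check that $v\mapsto D\LL(u)(v)$ is linear and continuous. Linearity is immediate since the integrand depends linearly on $(v,\DM v)$ and $\DM$ is linear. For continuity I would invoke Lemma \ref{lem1}, which gives $\frac{\partial L}{\partial x}(u,\DM u,\cdot)\in L^1$ and $\frac{\partial L}{\partial y}(u,\DM u,\cdot)\in L^q$. Bounding the first integral by $\big\Vert\frac{\partial L}{\partial x}(u,\DM u,\cdot)\big\Vert_{L^1}\Vert v\Vert_\infty$ and the second, via H\"older's inequality with exponents $q$ and $p$, by $\big\Vert\frac{\partial L}{\partial y}(u,\DM u,\cdot)\big\Vert_{L^q}\Vert\DM v\Vert_{L^p}$, then using the continuous embedding $\E\hookrightarrow\CC_0$ together with $\Vert\DM v\Vert_{L^p}\leq\Vert v\Vert_{\alpha,p}$, I obtain $|D\LL(u)(v)|\leq C\,\Vert v\Vert_{\alpha,p}$ for a constant $C$ independent of $v$. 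This shows $D\LL(u)$ is a bounded linear functional on $\E$, completing the proof of G\^ateaux-differentiability.
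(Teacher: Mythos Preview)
Your proof is correct and follows essentially the same approach as the paper's: both restrict to $|h|\leq 1$, use that $u,v\in\CC_0$ to bound $r_i,s_i$ by constants on the range of $u+\theta h v$, and dominate via \eqref{h2}, \eqref{h3} to apply dominated convergence (the paper phrases this as ``differentiation under the integral sign'', which is exactly your FTC-plus-dominated-convergence argument made into a black box). The continuity argument via Lemma~\ref{lem1} is identical.
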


\begin{proof}
Let $u$, $v \in \E \subset \CC_a$. Let $\psi_{u,v}$ defined for any $h \in [-1,1]$ and for almost all $t \in [a,b]$ by:
\begin{equation}
\psi_{u,v} (t,h) : = L\big(u(t)+hv(t),\DM u(t) + h \DM v(t),t\big).
\end{equation}
Then, we define the following mapping:
\begin{equation}
\fonction{\phi_{u,v}}{[-1,1]}{\R}{h}{\di \int_a^b L(u+hv,\DM u + h \DM v,t) \; dt = \di \int_a^b \psi_{u,v} (t,h) \; dt.}
\end{equation}
Our aim is to prove that the following term:
\begin{equation}
D\LL (u)(v) = \lim\limits_{h \to 0} \dfrac{\LL (u+hv) - \LL (u)}{h} = \lim\limits_{h \to 0} \dfrac{\phi_{u,v} (h) - \phi_{u,v}(0)}{h} = \phi_{u,v}'(0)
\end{equation}
exists in $\R$. In order to differentiate $\phi_{u,v}$, we use the theorem of differentiation under the integral sign. Indeed, we have for almost all $t \in [a,b]$, $\psi_{u,v}(t,\cdot)$ is differentiable on $[-1,1]$ with:
\begin{multline}
\forall h \in [-1,1], \; \dfrac{\partial \psi_{u,v}}{\partial h} (t,h) = \dfrac{\partial L}{\partial x} \big( u(t)+hv(t),\DM u (t)+h \DM v(t),t \big) \cdot v (t) \\ + \dfrac{\partial L}{\partial y} \big( u(t)+hv(t),\DM u (t)+h \DM v(t),t \big) \cdot \DM v (t).
\end{multline}
Then, from Hypotheses \eqref{h2} and \eqref{h3}, we have for any $h \in [-1,1]$ and for almost all $t \in [a,b]$:
\begin{multline}
\left\vert \dfrac{\partial \psi_{u,v}}{\partial h} (t,h) \right\vert \leq \Big[ r_2 \big( u(t)+hv(t),t \big) \Vert \DM u(t) + h \DM v (t) \Vert^{d_2} + s_2 \big( u(t)+hv(t),t \big) \Big] \Vert v(t) \Vert \\ + \Big[ r_3 \big( u(t)+hv(t),t \big) \Vert \DM u(t) + h \DM v (t) \Vert^{d_3} + s_3 \big( u(t)+hv(t),t \big) \Big] \Vert \DM v(t)\Vert.
\end{multline}
We define:
\begin{equation}
r_{2,0} := \max\limits_{(t,h) \in [a,b] \times [-1,1]} r_2 \big( u(t)+hv(t),t \big) 
\end{equation}
and we define similarly $s_{2,0}$, $r_{3,0}$, $s_{3,0}$. Finally, it holds:
\begin{multline}
\left\vert \dfrac{\partial \psi_{u,v}}{\partial h} (t,h) \right\vert \leq 2^{d_2} r_{2,0} ( \underbrace{  \Vert \DM u(t) \Vert^{d_2} + \Vert \DM v (t) \Vert^{d_2} }_{\in \L^{p/d_2} \subset \L^1}) \underbrace{ \Vert v(t)  \Vert }_{\in \CC_a \subset \L^\infty} + s_{2,0} \underbrace{ \Vert v(t)  \Vert }_{\in \CC_a \subset \L^1} \\ + 2^{d_3} r_{3,0} \underbrace{  \Vert \DM u(t) \Vert^{d_3} + \Vert \DM v (t) \Vert^{d_3} }_{\in \L^{p/d_3} \subset \L^q}) \underbrace{ \Vert \DM v(t)  \Vert }_{\in \L^p} + s_{3,0} \underbrace{ \Vert \DM v(t)  \Vert }_{\in \L^p \subset \L^1}.
\end{multline}
The right term is then a $\L^1$ function independent of $h$. Consequently, applying the theorem of differentiation under the integral sign, $\phi_{u,v}$ is differentiable with:
\begin{eqnarray}
\forall h \in [-1,1], \; \phi_{u,v}'(h) = \di \int_a^b \dfrac{\partial \psi_{u,v}}{\partial h} (t,h) \; dt .
\end{eqnarray}
Hence:
\begin{equation}
D\LL (u)(v) = \phi_{u,v}'(0) = \di \int_a^b \dfrac{\partial \psi_{u,v}}{\partial h} (t,0) \; dt = \di \int_a^b \dfrac{\partial L}{\partial x} (u,\DM u,t) \cdot v + \dfrac{\partial L}{\partial y} (u,\DM u,t) \cdot \DM v \; dt.
\end{equation}
From Lemma~\ref{lem1}, it holds:
\begin{equation}
\dfrac{\partial L}{\partial x} (u,\DM u,t) \in \L^1 \; \text{and} \; \dfrac{\partial L}{\partial y} (u,\DM u,t) \in \L^q.
\end{equation}
Since $v \in \CC_a \subset \L^\infty$ and $\DM v \in \L^p$, $D\LL (u)(v)$ exists in $\R$. Moreover, we have:
\begin{eqnarray*}
\vert D\LL (u)(v) \vert &  \leq  & \left\Vert \dfrac{\partial L}{\partial x} (u,\DM u,t) \right\Vert_{\L^1} \Vert v \Vert_{\infty} + \left\Vert \dfrac{\partial L}{\partial y} (u,\DM u,t) \right\Vert_{\L^q} \Vert \DM v \Vert_{\L^p} \\
& \leq &  \left( \dfrac{(b-a)^{\alpha -(1/p)}}{\Gamma (\alpha) \big( (\alpha-1)q+1 \big)^{1/q}} \left\Vert \dfrac{\partial L}{\partial x} (u,\DM u,t) \right\Vert_{\L^1} + \left\Vert \dfrac{\partial L}{\partial y} (u,\DM u,t) \right\Vert_{\L^q} \right) \vert v \vert_{\alpha,p}.
\end{eqnarray*}
Consequently, $D\LL (u)$ is linear and continuous from $\E$ to $\R$. The proof is complete. 
\end{proof}

\subsection{Sufficient condition for a weak solution}\label{section42}
In this section, we prove the following theorem:

\begin{theorem}\label{thm1}
Let us assume that $L$ satisfies Hypotheses \eqref{h1}, \eqref{h2} and \eqref{h3}. Then:
\begin{equation}
u \; \text{ is a critical point of} \; \LL \Longrightarrow u \; \text{is a weak solution of \eqref{elf}}. 
\end{equation}
\end{theorem}

\begin{proof}
Let $u$ be a critical point of $\LL$. Then, we have in particular:
\begin{equation}
\forall v \in \CC^\infty_c, \; D\LL (u)(v) = \di \int_a^b \dfrac{\partial L}{\partial x} (u,\DM u,t) \cdot v + \dfrac{\partial L}{\partial y} (u,\DM u,t) \cdot \DM v \; dt = 0.
\end{equation}
For any $v \in \CC^\infty_c \subset AC_a$, $\DM v = I^{1-\alpha}_- \dot{v} \in \CC^\infty_a$. Since $\partial L / \partial y (u,\DM u,t) \in \L^q$, Property~\ref{property3} gives:
\begin{equation}
\forall v \in \CC^\infty_c, \; \di \int_a^b \dfrac{\partial L}{\partial x} (u,\DM u,t) \cdot v + I^{1-\alpha}_+ \left( \dfrac{\partial L}{\partial y} (u,\DM u,t) \right) \cdot \dot{v} \; dt = 0.
\end{equation}
Finally, we define:
\begin{equation}
\forall t \in [a,b], \; w_u(t) = \di \int_a^t \dfrac{\partial L}{\partial x} (u,\DM u,t) \; dt .
\end{equation}
Since $\partial L / \partial x (u,\DM u,t) \in \L^1$, $w_u \in AC_a$ and $\dot{w_u} = \partial L / \partial x (u,\DM u,t)$. Then, an integration by parts leads to:
\begin{equation}
\forall v \in \CC^\infty_c, \; \di \int_a^b \left( I^{1-\alpha}_+ \left( \dfrac{\partial L}{\partial y} (u,\DM u,t) \right) - w_u \right) \cdot \dot{v} \; dt = 0.
\end{equation}
Consequently, there exists a constant $C \in \R^d$ such that:
\begin{equation}
I^{1-\alpha}_+ \left( \dfrac{\partial L}{\partial y} (u,\DM u,t)\right)= C+ w_u \in AC.
\end{equation}
By differentiation, we obtain:
\begin{equation}
-\DP \left( \dfrac{\partial L}{\partial y} (u,\DM u,t) \right) = \dfrac{\partial L}{\partial x} (u,\DM u,t),
\end{equation}
and then $u \in \E \subset \CC$ satisfies \eqref{elf} a.e. on $[a,b]$. The proof is complete.
\end{proof}

Let us note that the use of Property~\ref{property3} in the previous proof leads to the emergence of $\DP$ in \eqref{elf} although $\LL$ is only dependent of $\DM$. This asymmetry in \eqref{elf} is a strong drawback in order to solve it explicitly. However, from Theorem~\ref{thmprincipal}, the existence of a weak solution for \eqref{elf} will be guarantee.

\section{Existence of a global minimizer of $\LL$}\label{section5}
In this section, under assumptions \eqref{h4} and \eqref{h5}, we prove the existence of a global minimizer $u$ of $\LL$, see Theorem~\ref{thm2}. Then, $u$ is a critical point of $\LL$ and then, according to Theorem~\ref{thm1}, $u$ is a weak solution of \eqref{elf}. This concludes the proof of Theorem~\ref{thmprincipal}. \\

As usual in a variational method, in order to prove the existence of a global minimizer of a functional, coercivity and convexity hypotheses need to be added on the Lagrangian. We have already define Hypotheses \eqref{h4} (coercivity) and \eqref{h5} (convexity) in Section~\ref{section12}. In this section, we introduce two different convexity hypotheses \eqref{h5p} and \eqref{h5pp} under which Theorem~\ref{thmprincipal} is still valid:
\begin{itemize}
\item Convexity hypothesis denoted by \eqref{h5p}: 
\begin{multline}\tag{H${}^\prime_5$}\label{h5p}
\forall (x,t) \in \R^d \times [a,b], \; L(x,\cdot ,t) \; \text{is convex} \\ \text{and} \; \big( L(\cdot ,y,t) \big)_{(y,t) \in \R^d \times [a,b]} \; \text{is uniformly equicontinuous on} \; \R^d .
\end{multline}
We remind that the uniform equicontinuity of $\big( L(\cdot,y,t) \big)_{(y,t) \in \R^d \times [a,b]}$ has to be understood as:
\begin{multline}
\forall \varepsilon > 0, \; \exists \delta >0, \; \forall (x_1 , x_2) \in (\R^d)^2, \\ \Vert x_2 - x_1 \Vert < \delta \Longrightarrow \forall (y,t) \in \R^d \times [a,b], \vert L(x_2,y,t) - L(x_1,y,t) \vert < \varepsilon.
\end{multline}
Let us note that Hypotheses \eqref{h5} and \eqref{h5p} are independent. \\
\item Convexity hypothesis denoted by \eqref{h5pp}:
\begin{equation}\tag{H${}^{\prime \prime}_5$}\label{h5pp}
\forall (x,t) \in \R^d \times [a,b], \; L(x,\cdot ,t) \; \text{is convex}.
\end{equation}
\end{itemize}
Hypothesis \eqref{h5pp} is the weakest. Nevertheless, in this case, the detailed proof of Theorem~\ref{thm2} is more complicated. Consequently, in the case of Hypothesis \eqref{h5pp}, we do not develop the proof and we use a strong result proved in \cite{daco2}. \\

Let us prove the following preliminary result:

\begin{lemma}\label{lem2}
Let us assume that $L$ satisfies Hypothesis \eqref{h4}. Then, $\LL$ is coercive in the sense that:
\begin{equation}
\lim\limits_{\Vert u \Vert_{\alpha,p} \to +\infty} \LL (u) = +\infty.
\end{equation}
\end{lemma}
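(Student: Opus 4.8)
The plan is to bound $\LL(u)$ from below by an expression that tends to $+\infty$ as $\vert u \vert_{\alpha,p} = \Vert \DM u \Vert_{L^p} \to +\infty$, since $\Vert \cdot \Vert_{\alpha,p}$ and $\vert \cdot \vert_{\alpha,p}$ are equivalent norms. Starting from the coercivity hypothesis \eqref{h4}, integrating over $[a,b]$ gives, for any $u \in \E$,
\[
\LL(u) \geq \di \int_a^b c_1(u,t) \Vert \DM u \Vert^p \; dt + \di \int_a^b c_2(t) \Vert u \Vert^{d_4} \; dt + \di \int_a^b c_3(t) \; dt.
\]
The first term is controlled below by $\gamma \Vert \DM u \Vert_{L^p}^p = \gamma \vert u \vert_{\alpha,p}^p$, because $c_1 \geq \gamma > 0$ everywhere. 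The last term is a fixed finite constant. So the whole difficulty concentrates on showing that the middle term, involving $\Vert u \Vert^{d_4}$, does not spoil coercivity, i.e. that it cannot decrease faster than the positive $p$-power term grows.

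First I would handle the leading term and the constant, writing
\[
\LL(u) \geq \gamma \vert u \vert_{\alpha,p}^p - \Vert c_2 \Vert_{\infty} \di \int_a^b \Vert u \Vert^{d_4} \; dt - (b-a) \Vert c_3 \Vert_{\infty},
\]
where I have bounded $c_2(t)$ below by $-\Vert c_2 \Vert_{\infty}$ (using $c_2 \in \CC([a,b],\R) \subset L^\infty$) and dropped nothing from the genuinely positive first integral. The main step is then to estimate $\int_a^b \Vert u \Vert^{d_4} \; dt$ from above by a power of $\vert u \vert_{\alpha,p}$ of exponent strictly less than $p$. Here I would use the continuous embedding $\E \hookrightarrow \CC_0$ established in Section \ref{section33b}: there is a constant $K>0$ with $\Vert u \Vert_{\infty} \leq K \vert u \vert_{\alpha,p}$, hence
\[
\di \int_a^b \Vert u \Vert^{d_4} \; dt \leq (b-a) \Vert u \Vert_{\infty}^{d_4} \leq (b-a) K^{d_4} \vert u \vert_{\alpha,p}^{d_4}.
\]
Combining the two displays yields
\[
\LL(u) \geq \gamma \vert u \vert_{\alpha,p}^p - (b-a) K^{d_4} \Vert c_2 \Vert_{\infty} \vert u \vert_{\alpha,p}^{d_4} - (b-a) \Vert c_3 \Vert_{\infty}.
\]

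The hard part — really the only place where an assumption is essential — is the exponent comparison: this lower bound tends to $+\infty$ as $\vert u \vert_{\alpha,p} \to +\infty$ precisely because $1 \leq d_4 < p$, so the subtracted term is of strictly lower order than $\gamma \vert u \vert_{\alpha,p}^p$. Indeed, factoring out $\vert u \vert_{\alpha,p}^{d_4}$ shows the right-hand side behaves like $\gamma \vert u \vert_{\alpha,p}^{p}(1 + o(1))$, which diverges. Finally, since $\Vert u \Vert_{\alpha,p} \to +\infty$ is equivalent to $\vert u \vert_{\alpha,p} \to +\infty$ by the norm equivalence \eqref{eq54}, I conclude $\lim_{\Vert u \Vert_{\alpha,p} \to +\infty} \LL(u) = +\infty$, completing the proof. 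I would double-check only that $c_1(u,t) \geq \gamma$ is applied pointwise correctly (it follows from $c_1 \in \CC(\R^d \times [a,b], [\gamma,\infty[)$) and that $d_4 \geq 1$ is what keeps $\Vert u \Vert^{d_4}$ integrable and controllable by the sup-norm bound without any additional integrability subtlety.
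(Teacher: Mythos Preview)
Your proof is correct and follows essentially the same strategy as the paper: bound $\LL(u)$ below by $\gamma\,|u|_{\alpha,p}^p - C\,|u|_{\alpha,p}^{d_4} - C'$ and use $d_4<p$ together with the norm equivalence. The only minor difference is in how the middle term $\int_a^b \Vert u\Vert^{d_4}\,dt$ is controlled: the paper passes through $L^{d_4}\hookleftarrow L^p$ via H\"older and then applies the $L^p$--$L^p$ bound \eqref{eq54}, whereas you use directly the stronger embedding $\E\hookrightarrow\CC_0$ (Property~\ref{property4}) to bound by $\Vert u\Vert_\infty^{d_4}$; both routes land on $C\,|u|_{\alpha,p}^{d_4}$, so this is a cosmetic variation rather than a different argument.
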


\begin{proof}
Let $u \in \E$, we have:
\begin{equation}
\LL (u) = \di \int_a^b L(u,\DM u,t) \; dt \geq \di \int_a^b c_1(u,t) \Vert \DM u \Vert^p + c_2 (t) \Vert u \Vert^{d_4}+c_3(t) \; dt.
\end{equation}
Equation \eqref{eq54} implies that: 
\begin{equation}
\Vert u \Vert_{\L^{d_4}}^{d_4} \leq (b-a)^{1-\frac{d_4}{p}} \Vert u \Vert_{\L^{p}}^{d_4} \leq \dfrac{(b-a)^{\alpha +1-\frac{d_4}{p}}}{\Gamma (\alpha +1)} \Vert \DM u \Vert_{\L^{p}}^{d_4} =  \dfrac{(b-a)^{\alpha +1-\frac{d_4}{p}}}{\Gamma (\alpha +1)} \vert  u \vert_{\alpha,p}^{d_4}.
\end{equation}
Finally, we conclude that:
\begin{eqnarray}
\forall u \in \E, \; \LL (u) & \geq & \gamma \Vert \DM u \Vert^p_{\L^p} - \Vert c_2 \Vert_{\infty} \Vert u \Vert_{\L^{d_4}}^{d_4} - (b-a) \Vert c_3 \Vert_{\infty} \\
& \geq & \gamma \vert u \vert^p_{\alpha,p} - \dfrac{\Vert c_2 \Vert_{\infty} (b-a)^{\alpha +1-\frac{d_4}{p}}}{\Gamma (\alpha +1)} \vert u \vert_{\alpha,p}^{d_4} - (b-a) \Vert c_3 \Vert_{\infty} .
\end{eqnarray}
Since $d_4 < p$ and since the norms $\vert \cdot \vert_{\alpha,p}$ and $\Vert \cdot \Vert_{\alpha,p}$ are equivalent, the proof is complete.
\end{proof}
 
Now, we are ready to prove Theorem~\ref{thm2}:

\begin{theorem}\label{thm2}
Let us assume that $L$ satisfies Hypotheses \eqref{h1}, \eqref{h2}, \eqref{h3}, \eqref{h4} and one of Hypotheses \eqref{h5}, \eqref{h5p} or \eqref{h5pp}. Then, $\LL$ admits a global minimizer.
\end{theorem}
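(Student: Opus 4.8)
The plan is to run the direct method of the calculus of variations, exploiting the topology of $\E$ established in Section \ref{section3}. First I would observe that $m := \inf_{u \in \E} \LL(u)$ is finite: it is bounded above by $\LL(0) = \di \int_a^b L(0,0,t)\,dt \in \R$, and the coercivity estimate obtained in the proof of Lemma \ref{lem2} gives $\LL(u) \geq \gamma \vert u \vert_{\alpha,p}^p - C \vert u \vert_{\alpha,p}^{d_4} - C'$ for suitable constants, whose right-hand side is bounded below on $[0,+\infty[$ since $d_4 < p$; hence $m > -\infty$. Then I take a minimizing sequence $(u_n)_{n \in \N} \subset \E$ with $\LL(u_n) \to m$. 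By Lemma \ref{lem2}, if $\Vert u_n \Vert_{\alpha,p}$ were unbounded, a subsequence would send $\LL$ to $+\infty$, contradicting $\LL(u_n) \to m < +\infty$; therefore $(u_n)_{n \in \N}$ is bounded in $\E$.

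Second, since $\E$ is a reflexive Banach space, I would extract a subsequence (not relabelled) with $u_n \xrightharpoonup[]{\E} u$ for some $u \in \E$. The compact embedding $\E \hooktwoheadrightarrow \CC_0$ upgrades this weak convergence to the strong (uniform) convergence $u_n \xrightarrow[]{\CC} u$, so in particular $u_n \to u$ in $L^\infty$ while $\DM u_n \xrightharpoonup[]{} \DM u$ weakly in $L^p$. It then remains only to prove the weak lower semicontinuity inequality $\LL(u) \leq \liminf_{n \to \infty} \LL(u_n)$, for $\LL(u) \geq m$ combined with this inequality forces $\LL(u) = m$, making $u$ the sought global minimum.

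The heart of the argument is this lower semicontinuity, and its treatment depends on the convexity hypothesis. Under \eqref{h5}, the $\CC^1$ regularity and joint convexity of $L(\cdot,\cdot,t)$ yield the subgradient inequality
\[
L(u_n,\DM u_n,t) \geq L(u,\DM u,t) + \dfrac{\partial L}{\partial x}(u,\DM u,t) \cdot (u_n - u) + \dfrac{\partial L}{\partial y}(u,\DM u,t) \cdot (\DM u_n - \DM u)
\]
for almost every $t$. Integrating over $[a,b]$ and letting $n \to \infty$, the first cross term vanishes because $\partial L/\partial x(u,\DM u,t) \in L^1$ by Lemma \ref{lem1} and $\Vert u_n - u \Vert_\infty \to 0$, while the second vanishes because $\partial L/\partial y(u,\DM u,t) \in L^q$ by Lemma \ref{lem1} and $\DM u_n \xrightharpoonup[]{} \DM u$ weakly in $L^p$, which is precisely the pairing defining weak convergence. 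This gives $\liminf_{n} \LL(u_n) \geq \LL(u)$ and completes the case \eqref{h5}.

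The main obstacle is carrying out this step under the weaker convexity conditions. Under \eqref{h5p}, where convexity holds only in the $y$-variable, the subgradient trick breaks down in the $x$-slot; the plan there is to freeze that slot by comparing $\di \int_a^b L(u_n,\DM u_n,t)\,dt$ with $\di \int_a^b L(u,\DM u_n,t)\,dt$, bounding the difference by the uniform equicontinuity in $x$ together with $\Vert u_n - u \Vert_\infty \to 0$, and then applying convexity in $y$ to obtain the lower semicontinuity of $v \mapsto \di \int_a^b L(u,\DM v,t)\,dt$ along the weakly convergent sequence $\DM u_n$. Under the weakest hypothesis \eqref{h5pp}, this comparison becomes technically delicate, and rather than develop it in detail I would invoke the general lower semicontinuity theorem for integrands convex in the gradient variable proved in \cite{daco2}. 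In all three cases the conclusion $\LL(u) = m$ follows, so $\LL$ attains a global minimum on $\E$.
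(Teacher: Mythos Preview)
Your proposal is correct and follows essentially the same route as the paper's proof: the direct method via a minimizing sequence, boundedness from Lemma~\ref{lem2}, reflexivity of $\E$ and the compact embedding $\E \hooktwoheadrightarrow \CC_0$ to obtain $u_n \to u$ in $\CC$ and $\DM u_n \rightharpoonup \DM u$ in $L^p$, then lower semicontinuity via the subgradient inequality under \eqref{h5}, the freeze-and-compare argument with equicontinuity under \eqref{h5p}, and the appeal to \cite{daco2} under \eqref{h5pp}. The only minor addition is your explicit verification that $m > -\infty$, which the paper leaves implicit.
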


\begin{proof}
Let $(u_n)_{n \in \N}$ be a sequence in $\E$ satisfying:
\begin{equation}
\LL (u_n) \longrightarrow \inf\limits_{v \in \E} \LL (v) =: K.
\end{equation}
Since $L$ satisfies Hypothesis \eqref{h1}, $\LL (u) \in \R$ for any $u \in \E$. Hence, $K < +\infty$. Let us prove by contradiction that $(u_n)_{n \in \N}$ is bounded in $\E$. In the negative case, we can construct a subsequence $(u_{n_k})_{k \in \N}$ satisfying $\Vert u_{n_k} \Vert_{\alpha,p} \to + \infty$. Since $L$ satisfies Hypothesis \eqref{h4}, Lemma~\ref{lem2} gives:
\begin{equation}
K = \lim\limits_{k \in \N} \LL (u_{n_k}) = +\infty,
\end{equation}
which is a contradiction. Hence, $(u_n)_{n \in \N}$ is bounded in $\E$. Since $\E$ is reflexive, there exists a subsequence still denoted by $(u_n)_{n \in \N}$ converging weakly in $\E$ to an element denoted by $u \in \E$. Let us prove that $u$ is a global minimizer of $\LL$. Since:
\begin{equation}
u_n  \xrightharpoonup[]{\E} u \quad \text{and} \quad \E \hooktwoheadrightarrow \CC_a, 
\end{equation}
we have:
\begin{equation}\label{eq5}
u_n \xrightarrow[]{\CC} u \quad \text{and} \quad \DM u_n \xrightharpoonup[]{\L^p} \DM u.
\end{equation}
\textit{Case $L$ satisfies \eqref{h5}:} by convexity, it holds for any $n \in \N$:
\begin{multline}
\LL (u_n) =  \di \int_a^b L(u_n, \DM u_n ,t) \; dt \geq  \di \int_a^b L(u, \DM u ,t) \; dt \\ + \di \int_a^b \dfrac{\partial L}{\partial x}(u, \DM u ,t) \cdot (u_n -u) \; dt + \di \int_a^b \dfrac{\partial L}{\partial y}(u, \DM u ,t) \cdot (\DM u_n - \DM u) \; dt.
\end{multline}
Since $L$ satisfies Hypotheses \eqref{h2} and \eqref{h3}, $\partial L / \partial x(u, \DM u ,t) \in \L^1$ and $\partial L / \partial y(u, \DM u ,t) \in \L^q$. Consequently, using \eqref{eq5} and making $n$ tend to $+\infty$, we obtain:
\begin{equation}
K = \inf\limits_{v \in \E} \LL (v) \geq \di \int_a^b L(u, \DM u ,t) \; dt = \LL (u).
\end{equation}
Consequently, $u$ is a global minimizer of $\LL$. \\

\textit{Case $L$ satisfies \eqref{h5p}:} let $\varepsilon >0$. Since $(u_n)_{n \in \N}$ converges strongly in $\CC$ to $u$, we have:
\begin{equation}
\exists N \in \N, \; \forall n \geq N, \; \Vert u_n -u \Vert_{\infty} < \delta,
\end{equation}
where $ \delta$ is given in the definition of \eqref{h5p}. In consequence, it holds a.e. on $[a,b]$:
\begin{equation}\label{eq6}
\forall n \geq N, \;  \vert L \big( u_n(t), \DM u_n (t),t\big) - L \big( u(t), \DM u_n (t),t\big) \vert < \varepsilon.
\end{equation}
Moreover, for any $n \geq N$, we have:
\begin{multline}
\LL (u_n) =  \di \int_a^b L(u,\DM u,t) \; dt  + \di \int_a^b L(u_n,\DM u_n,t) - L(u,\DM u_n,t) \; dt \\ + \di \int_a^b L(u,\DM u_n,t) -  L(u,\DM u,t) \; dt.
\end{multline}
Then, for any $n \geq N$, it holds by convexity:
\begin{multline}
\LL (u_n) \geq  \di \int_a^b L(u,\DM u,t) \; dt  - \di \int_a^b \vert L(u_n,\DM u_n,t) - L(u,\DM u_n,t) \vert \; dt \\ + \di \int_a^b \dfrac{\partial L}{\partial y}(u, \DM u, t) \cdot (\DM u_n - \DM u) \; dt.
\end{multline}
And, using Equation \eqref{eq6}, we obtain for any $n \geq N$:
\begin{equation}\label{eq7}
\LL (u_n) \geq  \di \int_a^b L(u,\DM u,t) \; dt  - \varepsilon (b-a)  + \di \int_a^b \dfrac{\partial L}{\partial y}(u, \DM u, t) \cdot (\DM u_n - \DM u) \; dt.
\end{equation}
We remind that $\partial L / \partial y(u, \DM u ,t) \in \L^q$ since $L$ satisfies \eqref{h3}. Since  $(\DM u_n)_{n \in \N}$ converges weakly in $\L^p$ to $\DM u$, we obtain by making $n$ tend to $+\infty$ and then by making $\varepsilon$ tend to $0$:
\begin{equation}
K = \inf\limits_{v \in \E} \LL (v) \geq \di \int_a^b L(u, \DM u ,t) \; dt = \LL (u).
\end{equation}
Consequently, $u$ is a global minimizer of $\LL$. \\

\textit{Case $L$ satisfies \eqref{h5pp}:} we refer to Theorem 3.23 in \cite{daco2}.
\end{proof}

Finally, combining Theorems~\ref{thm1} and~\ref{thm2}, the proof of Theorem~\ref{thmprincipal} is now complete.

\section{Examples}\label{section6}
Let us consider some examples of Lagrangian $L$ satisfying Hypotheses of Theorem~\ref{thmprincipal}. Consequently, the fractional Euler-Lagrange equation \eqref{elf} associated admits a weak solution $u \in \E$. \\

The most classical example is the Dirichlet integral, \textit{i.e.} the Lagrangian functional associated to the Lagrangian $L$ given by:
\begin{equation}
L(x,y,t) = \dfrac{1}{2} \Vert y \Vert^2.
\end{equation}
In this case, $L$ satisfies Hypotheses \eqref{h1}, \eqref{h2}, \eqref{h3}, \eqref{h4} and \eqref{h5} for $p = 2$. Hence, the fractional Euler-Lagrange equation \eqref{elf} associated admits a weak solution in $\E$ for $(1 /2) < \alpha < 1$. \\

In a more general case, the following Lagrangian $L$:
\begin{equation}
L(x,y,t) = \dfrac{1}{p} \Vert y \Vert^p + a(x,t),
\end{equation}
where $p > 1$ and $a \in \CC^1(\R^d \times [a,b],\R^+)$, satisfies Hypotheses \eqref{h1}, \eqref{h2}, \eqref{h3}, \eqref{h4} and \eqref{h5pp}. Consequently, the fractional Euler-Lagrange equation \eqref{elf} associated to $L$ admits a weak solution in $\E$ for any $(1 /p) < \alpha < 1$. Let us note that if for any $t \in [a,b]$, $a(\cdot,t)$ is convex, then $L$ satisfies Hypothesis \eqref{h5}. \\

In the unidimensional case $d=1$, let us take a Lagrangian with a second term linear in its first variable, \textit{i.e.}:
\begin{equation}
L(x,y,t) = \dfrac{1}{p} \vert y \vert^p + f(t) x,
\end{equation}
where $p > 1$ and $f \in \CC^1([a,b],\R)$. Then, $L$ satisfies Hypotheses \eqref{h1}, \eqref{h2}, \eqref{h3}, \eqref{h4} and \eqref{h5}. Then, the fractional Euler-Lagrange equation \eqref{elf} associated admits a weak solution in $\E$ for any $(1 /p) < \alpha < 1$. \\

Theorem~\ref{thmprincipal} is a result based on strong conditions on Lagrangian $L$. Consequently, some Lagrangian do not satisfy all hypotheses of Theorem~\ref{thmprincipal}. We can cite the Bolza's example in dimension $d=1$ given by:
\begin{equation}
L(x,y,t) = (y^2 - 1)^2 + x^4.
\end{equation}
$L$ does not satisfy Hypothesis \eqref{h4} neither Hypothesis \eqref{h5pp}. Nevertheless, as usual with variational methods, the conditions of regularity, coercivity and/or convexity can often be replaced by weaker assumptions specific to the studied problem. As an example, we can cite \cite{torr6} and references therein about higher-order integrals of the calculus of variations. Indeed, in this paper, it is proved that calculus of variations is still valid with weaker regularity assumptions.

\section*{Conclusion}
The method developed in this paper gives a framework in order to study the existence of weak solutions for fractional Euler-Lagrange equations. In this paper, we have studied the special case of a Lagrangian functional involving fractional derivatives of Riemann-Liouville. Nevertheless, such a method can also be developed in the case of fractional derivatives of Caputo or Hadamard. Indeed, these operators satisfy similar properties than Riemann-Liouville's ones, see \cite{kilb,samk}. In fact, the same method can be developed in the case of general linear operators used in \cite{agra5,kiry,odzi,odzi2}: this is the aim of a forthcoming paper. 

\bibliographystyle{plain}

\begin{thebibliography}{10}

\bibitem{agra}
O.P. Agrawal.
\newblock Formulation of {E}uler-{L}agrange equations for fractional
  variational problems.
\newblock {\em J. Math. Anal. Appl.}, 272(1):368--379, 2002.

\bibitem{agra5}
O.P. Agrawal.
\newblock Generalized variational problems and {E}uler-{L}agrange equations.
\newblock {\em Comput. Math. Appl.}, 59(5):1852--1864, 2010.

\bibitem{alme}
R.~Almeida, A.B. Malinowska and D.F.M. Torres.
\newblock A fractional calculus of variations for multiple integrals with
  application to vibrating string.
\newblock {\em J. Math. Phys.}, 51(3):033503, 12, 2010.

\bibitem{bale2}
D.~Baleanu and S.I. Muslih.
\newblock Lagrangian formulation of classical fields within
  {R}iemann-{L}iouville fractional derivatives.
\newblock {\em Phys. Scripta}, 72(2-3):119--121, 2005.

\bibitem{boni}
B.~Bonilla, M.~Rivero, L.~Rodr{\'{\i}}guez-Germ{\'a} and J.~J. Trujillo.
\newblock Fractional differential equations as alternative models to nonlinear
  differential equations.
\newblock {\em Appl. Math. Comput.}, 187(1):79--88, 2007.

\bibitem{bour2}
L.~Bourdin, J.~Cresson and I.~Greff.
\newblock A continuous/discrete fractional {N}oether's theorem.
\newblock {\em {A}ccepted in Communications in Nonlinear Science and Numerical
  Simulation. {P}reprint arXiv:1203.1206v1 [math.DS]}.

\bibitem{brez}
H.~Brezis.
\newblock {\em Functional analysis, {S}obolev spaces and partial differential
  equations}.
\newblock Universitext. Springer, New York, 2011.

\bibitem{cesa}
L.~Cesari.
\newblock {\em Optimization---theory and applications}, volume~17 of {\em
  Applications of Mathematics (New York)}.
\newblock Springer-Verlag, New York, 1983.
\newblock Problems with ordinary differential equations.

\bibitem{comt}
F.~Comte.
\newblock Op\'erateurs fractionnaires en \'econom\'etrie et en finance.
\newblock {\em Pr\'epublication MAP5}, 2001.

\bibitem{cres8}
J.~Cresson, I.~Greff and P.~Inizan.
\newblock Lagrangian for the convection-diffusion equation.
\newblock {\em Mathematical Methods in the Applied Sciences}, 2011.

\bibitem{cres7}
J.~Cresson and P.~Inizan.
\newblock Variational formulations of differential equations and asymmetric
  fractional embedding.
\newblock {\em J. Math. Anal. Appl.}, 385(2):975--997, 2012.

\bibitem{daco2}
B.~Dacorogna.
\newblock {\em Direct methods in the calculus of variations}, volume~78 of {\em
  Applied Mathematical Sciences}.
\newblock Springer, New York, second edition, 2008.

\bibitem{torr3}
G.S.F. Frederico and D.F.M. Torres.
\newblock A formulation of {N}oether's theorem for fractional problems of the
  calculus of variations.
\newblock {\em J. Math. Anal. Appl.}, 334(2):834--846, 2007.

\bibitem{godb}
C.~Godbillon.
\newblock {\em G\'eom\'etrie diff\'erentielle et m\'ecanique analytique}.
\newblock Hermann, Paris, 1969.

\bibitem{hilf3}
R.~Hilfer.
\newblock Applications of fractional calculus in physics.
\newblock {\em World Scientific, River Edge, New Jersey}, 2000.

\bibitem{jiao}
F.~Jiao and Y.~Zhou.
\newblock Existence of solutions for a class of fractional boundary value
  problems via critical point theory.
\newblock {\em Comput. Math. Appl.}, 62(3):1181--1199, 2011.

\bibitem{kilb}
A.A. Kilbas, H.M. Srivastava and J.J. Trujillo.
\newblock {\em Theory and applications of fractional differential equations},
  volume 204 of {\em North-Holland Mathematics Studies}.
\newblock Elsevier Science B.V., Amsterdam, 2006.

\bibitem{kiry}
V.~Kiryakova.
\newblock {\em Generalized fractional calculus and applications}, volume 301 of
  {\em Pitman Research Notes in Mathematics Series}.
\newblock Longman Scientific \& Technical, Harlow, 1994.

\bibitem{klim}
M.~Klimek.
\newblock Existence - uniqueness result for a certain equation of motion in
  fractional mechanics.
\newblock {\em Bulletin of the polish acacdemy of sciences}, 58(4):73--78.

\bibitem{kolm}
A.~Kolmogorov, S.~Fomine and V.M. Tihomirov.
\newblock {\em El\'ements de la th\'eorie des fonctions et de l'analyse
  fonctionnelle}.
\newblock \'Editions Mir, Moscow, 1974.
\newblock Avec un compl{\'e}ment sur les alg{\`e}bres de Banach, par V. M.
  Tikhomirov, Traduit du russe par Michel Dragnev.

\bibitem{magi}
R.L. Magin.
\newblock Fractional calculus models of complex dynamics in biological tissues.
\newblock {\em Comput. Math. Appl.}, 59(5):1586--1593, 2010.

\bibitem{torr5}
A.B. Malinowska and D.F.M. Torres.
\newblock {\em Introduction to the fractional calculus of variations}.
\newblock Imperial College Press. London, 2012.

\bibitem{metz}
R.~Metzler and J.~Klafter.
\newblock The random walk's guide to anomalous diffusion: a fractional dynamics
  approach.
\newblock {\em Phys. Rep.}, 339(1):77, 2000.

\bibitem{odzi}
T.~Odzijewicz, A.B. Malinowska and D.F.M. Torres.
\newblock Fractional calculus of variations in terms of a generalized
  fractional integral with applications to {P}hysics.
\newblock {\em Abstr. Appl. Anal.}, 2012.

\bibitem{odzi4}
T.~Odzijewicz, A.B. Malinowska and D.F.M. Torres.
\newblock Fractional variational calculus with classical and combined {C}aputo
  derivatives.
\newblock {\em Nonlinear Anal.}, 75(3):1507--1515, 2012.

\bibitem{odzi2}
T.~Odzijewicz, A.B. Malinowska and D.F.M. Torres.
\newblock Generalized fractional calculus with applications to the calculus of
  variations.
\newblock {\em Comput. Math. Appl.}, 2012.

\bibitem{podl}
I.~Podlubny.
\newblock {\em Fractional differential equations}, volume 198 of {\em
  Mathematics in Science and Engineering}.
\newblock Academic Press Inc., San Diego, CA, 1999.
\newblock An introduction to fractional derivatives, fractional differential
  equations, to methods of their solution and some of their applications.

\bibitem{riew}
F.~Riewe.
\newblock Nonconservative {L}agrangian and {H}amiltonian mechanics.
\newblock {\em Phys. Rev. E (3)}, 53(2):1890--1899, 1996.

\bibitem{riew2}
F.~Riewe.
\newblock Mechanics with fractional derivatives.
\newblock {\em Phys. Rev. E (3)}, 55(3, part B):3581--3592, 1997.

\bibitem{samk}
S.G. Samko, A.A. Kilbas and O.I. Marichev.
\newblock {\em Fractional integrals and derivatives}.
\newblock Gordon and Breach Science Publishers, Yverdon, 1993.
\newblock Theory and applications, Translated from the 1987 Russian original.

\bibitem{torr6}
M.R. Sidi~Ammi and D.F.M. Torres.
\newblock Regularity of solutions to higher-order integrals of the calculus of
  variations.
\newblock {\em Internat. J. Systems Sci.}, 39(9):889--895, 2008.

\bibitem{zhou}
J.~Wang and Y.~Zhou.
\newblock A class of fractional evolution equations and optimal controls.
\newblock {\em Nonlinear Anal. Real World Appl.}, 12(1):262--272, 2011.

\bibitem{zhou2}
Y.~Zhou and F.~Jiao.
\newblock Nonlocal {C}auchy problem for fractional evolution equations.
\newblock {\em Nonlinear Anal. Real World Appl.}, 11(5):4465--4475, 2010.

\bibitem{neel}
A.~Zoia, M.-C. N\'eel and A.~Cortis.
\newblock Continuous-time random-walk model of transport in variably saturated
  heterogeneous porous media.
\newblock {\em Phys. Rev. E}, 81(3):031104, Mar 2010.

\bibitem{neel2}
A.~Zoia, M.-C. N\'eel and M.~Joelson.
\newblock Mass transport subject to time-dependent flow with nonuniform
  sorption in porous media.
\newblock {\em Phys. Rev. E}, 80:056301, 2009.

\end{thebibliography}

\end{document}